\definecolor{labelkey}{gray}{.8}
\definecolor{refkey}{gray}{.8}
\definecolor{darkred}{rgb}{0.9,0.1,0.1}
\definecolor{darkgreen}{rgb}{0,0.5,0}
\newtheorem{theorem}{Theorem}[section]
\newtheorem{lemma}[theorem]{Lemma}
\newtheorem{corollary}[theorem]{Corollary}
\theoremstyle{remark}
\renewenvironment{proof}[1][Proof]{ {\itshape \noindent {#1.}} }{$\Box$
\medskip}
\numberwithin{equation}{section}
\newcommand{\R}{\mathbb{R}}
\newcommand{\PP}{\mathbf{P}}
\newcommand{\e}{\varepsilon}
\newcommand{\F}{\mathcal{F}}
\newcommand{\eps}{\varepsilon}
\newcommand{\Var}{\mathrm{Var}}
\newcommand{\EE}{\mathbf{E}}
\newcommand{\cov}{\mathrm{Cov}}
\newcommand{\corr}{\mathrm{Corr}}
\newcommand{\cP}{\mathcal{P}}
\renewcommand{\d}{\mathrm{d}}
\newcommand{\hk}{\mathsf{p}}
\newcommand{\calW}{\mathcal{W}}
\newcommand{\bartau}{\bar{\tau}}
\newcommand{\bartheta}{\bar{\theta}}
\newcommand{\ip}[1]{\langle#1\rangle}
\newcommand{\Ip}[1]{\big\langle#1\big\rangle}
\newcommand{\jfn}{\mathsf{j}}
\newcommand{\she}{\scriptscriptstyle\mathrm{she}}
\newcommand{\sg}{\mathcal{Q}}
\newcommand{\intv}[1]{#1}
\begin{document}

\title[SHF is a black noise]{Stochastic Heat Flow is a Black Noise}
\author{Yu Gu, Li-Cheng Tsai}

\address[Yu Gu]{Department of Mathematics, University of Maryland, College Park, MD 20742, USA}

\address[Li-Cheng Tsai]{Department of Mathematics, University of Utah, Salt Lake City, UT 84112, USA}

\begin{abstract}
We prove that the stochastic heat flow \cite{caravenna2023critical,tsai2024stochastic} is a black noise in the sense of Tsirelson \cite{Tsirelson2004}.
As a corollary, the 2$d$ stochastic heat equation driven by a mollified spacetime white noise becomes asymptotically independent of that noise in the scaling limit.

\bigskip


\noindent \textsc{Keywords:} Black noise, noise sensitivity, stochastic heat flow

\end{abstract}
\maketitle

\section{Introduction}

\subsection{SHF formulation}
In this paper, we prove that the Stochastic Heat Flow (SHF) is a black noise, working with the SHF formulation in \cite{tsai2024stochastic}, as a continuous process characterized by a list of axioms. The SHF was first constructed in \cite{caravenna2023critical} as the finite dimensional distribution limit of discrete polymers in random environments. The methods of this paper require working with the formulation of \cite{tsai2024stochastic}. First, the continuity of the SHF ensures a sufficiently good grasp of the underlying sigma algebra. Second, and more subtly, our proof crucially relies on working with a formulation that does not explicitly reference pre-limiting models.

\subsection{Main result}
By \cite{tsai2024stochastic}, the SHF $Z^{\theta}=Z=(Z_{s,t})_{s\leq t}$ is a continuous process indexed by $(s,t)\in \R^2_{\leq}:=\{(s,t)\in \R^2:s\leq t\}$ that takes values in $M_+(\R^4)$, the space of positive locally finite measures on $\R^4$. 
The law of $Z$ is characterized by a few axioms, which we will recall in Section~\ref{s.pf.shf}.
Throughout the paper, we will fix the parameter $\theta\in\R$ and omit the dependence of $Z$ on $\theta$.

Define the sample space as
\[
\Omega=C(\R^2_{\leq}, M_+(\R^4)).
\]
For any $f\in C_c(\R^4)$,  $\mu\in \Omega$ and $(s,t)\in \R^2_{\leq}$, write $\mu_{s,t}f=\int_{\R^4}\mu_{s,t}(\d x, \d x')f(x,x')$, with $x,x'\in\R^2$. 
Let $\F$ be the $\sigma$-algebra generated by sets of the form 
\begin{align}\label{e.filt}
    \{\mu\in \Omega: \mu_{s,t}f\in (a,b)\}, \quad\quad \mbox{ with } f\in C_c(\R^4),\, (s,t)\in \R^2_{\leq}, \,a<b.
\end{align}
Similarly, for any $s<t$, let $\F_{s,t}$ be the $\sigma$-algebra generated by sets of the form 
\[
\{\mu\in \Omega: \mu_{s',t'}f\in (a,b)\}, \quad\quad \mbox{ with } f\in C_c(\R^4), \,[s',t']\in [s,t],\, a<b.
\]
For any $u\in \R$, define the shift operator $T_u: \Omega\to\Omega$ by 
\[
(T_u\mu)_{s,t}=\mu_{s+u,t+u}.
\]
Let $\PP$ be the probability measure on $(\Omega,\F)$ induced by $Z=(Z_{s,t})_{s\leq t}$. According to \cite[Section 3d1]{Tsirelson2004}, the probability space $(\Omega,\F,\PP)$, together with the sub-$\sigma$-algebras $\{\F_{s,t}\}_{(s,t)\in\R^2_{\leq}}$ and the shift operator $(T_u)_{u\in\R}$, forms a \emph{noise}, or \emph{a homogeneous continuous product of probability
space}.

We now define the space of linear random variables on $(\Omega,\F,\PP)$ as in \cite[6a1~Proposition]{Tsirelson2004}:
\[
H_1=\{X\in L^2(\Omega): \EE[X|\F_{s,t}]=\EE[X|\F_{s,u}]+\EE [X|\F_{u,t}] \mbox{ for all } s<u<t\}.
\]
It is clear that $H_1$ is a closed subspace of $L^2(\Omega)$. 
According to \cite[Section~7a1]{Tsirelson2004}, the noise is black if and only if $H_1=\{0\}$.
The main result of the paper is as follows.

\begin{theorem}\label{t.mainth}
The noise defined above is black, namely $H_1=\{0\}$. 
\end{theorem}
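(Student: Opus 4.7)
My plan is to establish $H_1=\{0\}$ by combining the variance-additivity identity implied by membership in $H_1$ with a short-interval second-moment estimate for SHF observables. Fix $X \in H_1$. By localization and a standard density argument, since $\F$ is generated (up to null sets) by the random variables $Z_{s,t}f$ with $f \in C_c(\R^4)$, it suffices to consider $X$ that is $\F_{S,T}$-measurable for some bounded $[S,T]$. The independence of $\F_{s,u}$ and $\F_{u,t}$ for $s<u<t$ (which holds because the SHF axioms make $Z$ a noise in Tsirelson's sense), together with the defining identity of $H_1$, yields the variance-additivity
\begin{equation*}
\|X\|_{L^2}^2 \;=\; \sum_{i=1}^n \bigl\|\E[X \mid \F_{t_{i-1},t_i}]\bigr\|_{L^2}^2
\end{equation*}
for every partition $S = t_0 < \cdots < t_n = T$. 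If each term on the right is $o(t_i-t_{i-1})$ uniformly in position, then mesh refinement forces $\|X\|_{L^2}^2 = 0$.

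The technical heart is therefore a short-interval variance bound: for $X$ in a dense subclass of $L^2$,
\begin{equation*}
\bigl\|\E[X \mid \F_{u,v}] - \E[X]\bigr\|_{L^2}^2 \;=\; o(v-u) \quad \text{as } v-u \to 0.
\end{equation*}
The axiomatic Chapman--Kolmogorov flow property of SHF (from \cite{tsai2024stochastic}) and the independence of $Z$ on disjoint time intervals should let one write $\E[Z_{s,t}f \mid \F_{u,v}]$, for $[u,v] \subset [s,t]$, as $Z_{u,v}$ tested against a deterministic kernel assembled from the unconditional averages of $Z_{s,u}$ and $Z_{v,t}$. Taking centered second moments then reduces the task to controlling $\Var(Z_{u,v}g)$ for small $v-u$ and test functions $g$ in a suitable class. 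The explicit second-moment formula for the critical 2$d$ SHF from \cite{caravenna2023critical, tsai2024stochastic}, which expresses this variance as an integral against a log-corrected heat-type kernel, should then produce the desired sub-linear short-time rate.

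Plugging the short-time bound into the additivity identity and refining the partition forces $\|X\|_{L^2}^2 = 0$ for $X$ in the dense subclass. An $L^2$-approximation using the fact that the orthogonal projection onto the closed subspace $H_1$ is a contraction then propagates the conclusion to all of $H_1$, proving blackness via \cite[Section 7a1]{Tsirelson2004}. The principal obstacle is the moment estimate itself: extracting a strictly sub-linear short-time rate for $\Var(Z_{u,v}g)$ from the SHF second-moment formula is delicate, since the logarithmic singularities native to the critical 2$d$ regime must be tracked carefully to beat the linear factor $n \sim (T-S)/(v-u)$ coming from the partition.
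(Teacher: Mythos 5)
Your overall skeleton---variance additivity over a partition for elements of $H_1$, a short-interval bound $\Var(\E[X\mid\F_{u,v}])=o(v-u)$ on a dense class, and a density/projection argument to conclude---is the same as the paper's, which implements it via Tsirelson's criterion ($\cP_nX\to\cP X$ for the projection $\cP$ onto $H_1$). One expository slip: the additivity identity $\|X\|^2=\sum_i\|\E[X\mid\F_{t_{i-1},t_i}]\|^2$ holds only for $X\in H_1$, while the short-interval bound is proved for a dense class that does not lie in $H_1$; so "mesh refinement forces $\|X\|^2=0$ for $X$ in the dense subclass" is not literally what happens. What one actually gets is $\cP X=0$ on the dense class (equivalently, via Cauchy--Schwarz, that every $Y\in H_1$ is orthogonal to the dense class), whence $H_1=\{0\}$. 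This bridge is exactly \cite[Lemma~7a2, Corollary~7a3]{Tsirelson2004} and is repairable in a line.

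The genuine gap is at what you correctly identify as the technical heart. Your reduction of $\E[X\mid\F_{u,v}]$ to "$Z_{u,v}$ tested against a deterministic kernel," and hence to $\Var(Z_{u,v}g)$, is valid only for \emph{linear} observables $X=Z_{s,t}f$. The linear span of such observables is not dense in $L^2(\Omega)$, and the paper notes that even polynomial observables likely fail to be dense here because the moments of the SHF are expected to grow too fast. The dense class one must actually handle consists of nonlinear observables $X=h(Z_{s_1,t_1}g_1\otimes g'_1,\ldots,Z_{s_N,t_N}g_N\otimes g'_N)$ with $h\in C_c^\infty$, and for these the conditional expectation given $\F_{s,t}$ does not reduce to a second-moment computation for a single $Z_{u,v}g$. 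The paper's proof supplies the missing machinery: it splits $Z_{s_i,t_i}=Z_{s_i,s}\bullet P_{s,t}\bullet Z_{t,t_i}+Z_{s_i,s}\bullet W_{s,t}\bullet Z_{t,t_i}=:U_i+V_i$ with $W_{s,t}=Z_{s,t}-P_{s,t}$ and $U_i$ independent of $\F_{s,t}$, Taylor-expands $h$ in $V$, kills the zeroth-order term by independence and the first-order term's mean by $\E W_{s,t}=0$, controls the quadratic remainder by the fourth-moment bound $\E|V|^4\le C(t-s)^p$ with $p<3/2$ from \cite[Lemma~3.1]{tsai2024stochastic}, and evaluates the second moment of the first-order term by a replica computation using the explicit two-point function $\calW(t-s)$ together with the bound $\jfn(t)\le C/(t|\log t|^2)$. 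Your observation that the covariance kernel of $Z_{u,v}$ carries a logarithmic gain is the right source of the sublinear rate $\frac{t-s}{|\log(t-s)|}$, but without the Taylor expansion and the fourth-moment control of the remainder the argument does not reach a dense subset of $L^2(\Omega)$, so as written it does not prove $H_1=\{0\}$.
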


An important consequence of Theorem~\ref{t.mainth} is an asymptotic independence result, stated and proved as Corollary~\ref{c.indep} below.
Independently of the present work, the work \cite{CD25} proved an enhanced noise sensitivity result for  discrete polymers in random environments and achieved an asymptotic independence of the polymers and the SHF, analogous to Corollary~\ref{c.indep}
\footnote{One of us (YG) attended a presentation by Francesco Caravenna in February 2025 on this result.}.
%

To state this precisely, we recall the convergence of the noise-mollified stochastic heat equation to the SHF obtained in \cite[Section~4]{tsai2024stochastic}.
For any $(s,x')\in \R^3$, let $\{Z^\eps_{s,t}(x',x)\}_{t\geq s, x\in\R^2}$ be the solution to 
\begin{align}
	\label{e.she}
	&\partial_t Z^\eps=\tfrac12\Delta_x Z^\eps+\sqrt{\beta^\eps} Z^\eps \xi^\eps(t,x), \quad\quad  t>s, x \in\R^2, 
\\
	\notag
	&Z^\eps_{s,s}(x',x)=\delta(x-x').
\end{align}
Here $\xi^\eps$ is the spatial mollification of the spacetime white noise $\xi$: 
\begin{align}
	\label{e.noise.mollified}
	\xi^\eps(t,x)=\int_{\R^2} \d x''\,\varphi^\eps(x-x'')\,\xi(t,x''),
\end{align}
with mollifier $\varphi^\eps(x)=\tfrac{1}{\eps^2}\varphi(\tfrac{x}{\eps})$ where $\varphi\in C_c^\infty(\R^2)$ is symmetric and satisfies $\int_{\R^2} \varphi =1$. 
Define $\Phi=\varphi\star \varphi$.
The coupling constant $\beta^\eps$ is given by 
\begin{align*}
	\beta^\eps=\frac{2\pi}{|\log \eps|}+\frac{\pi}{|\log \eps|^2}\Big(\theta-2\log 2+2\gamma+2\int_{\R^4} \d x\,\d x'\,\Phi(x)\,\log |x-x'|\,\Phi(x')\Big),
\end{align*}
where $\gamma= 0.577...$ is the Euler–Mascheroni constant, and $\theta\in\R$ is the fixed parameter mentioned at the beginning of this paper.
With slight abuse of notation, we view $Z^\eps$ as an $\Omega$-valued random variable through
\[
Z^\eps_{s,t} f:=\int_{\R^4} \d x'\,\d x \, Z^\eps_{s,t}(x',x)\, f(x',x), \quad\quad s\leq t,  f\in C_c(\R^4).
\]
Equip $M_+(\R^4)$ with the vague topology: the topology induced by $C_c(\R^4)$.
It was shown in \cite[Section~4]{tsai2024stochastic} that $Z^\eps\Rightarrow Z$ in law as $\eps\to0$, under the uniform-on-compact topology of $C(\R^2_{\leq}, M_+(\R^4))$. 
Since $Z^\eps$ is a measurable function of $\xi$, it is natural to investigate the dependence of $Z^\eps$ on $\xi$ as $\eps\to0$. 
The following corollary asserts that they become independent in the limit.

\begin{corollary}\label{c.indep}
We have that $(Z^\eps,\xi^\eps)\Rightarrow (Z,\xi)$ in law as $\eps\to0$, under the product topology of $C(\R^2_{\leq}, M_+(\R^4))\times \mathcal{S}'(\R^2)$, where $Z$ is independent of $\xi$.
\end{corollary}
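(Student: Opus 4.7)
The plan is a Cameron--Martin (Girsanov) computation on the driving white noise that yields the independence directly in the pre-limit, combined with the smallness $\sqrt{\beta^\eps}\to 0$ to absorb the shift in the limit. Marginally, $Z^\eps\Rightarrow Z$ by \cite[Section~4]{tsai2024stochastic} and $\xi^\eps\to\xi$ in $\cS'$ a.s., so $(Z^\eps,\xi^\eps)$ is tight in the product topology; along any subsequence, extract $(Z^\eps,\xi^\eps)\Rightarrow(\tilde Z,\tilde\xi)$ with marginals $Z$ and $\xi$.

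Fix a test function $f\in C_c^\infty(\R\times\R^2)$, $\lambda\in\R$, and a bounded continuous functional $G$ on $C(\R^2_{\leq},M_+(\R^4))$. The Cameron--Martin identity for $\xi$ yields
\begin{equation*}
\E\!\left[G(Z^\eps)\,e^{\lambda\xi^\eps(f)}\right]=e^{\lambda^2\|\varphi^\eps\star f\|_{L^2}^2/2}\,\E\!\left[G(\hat Z^\eps)\right],
\end{equation*}
where $\hat Z^\eps$ solves \eqref{e.she} with $\xi$ replaced by $\xi+\lambda(\varphi^\eps\star f)$; equivalently, the mollified driver is shifted as $\xi^\eps\mapsto\xi^\eps+\lambda(\Phi^\eps\star f)$ with $\Phi^\eps=\varphi^\eps\star\varphi^\eps$. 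A Feynman--Kac representation writes $\hat Z^\eps_{s,t}$ as $Z^\eps_{s,t}$ multiplied, inside the Brownian expectation, by $\exp\bigl(\sqrt{\beta^\eps}\lambda\int_0^{t-s}(\Phi^\eps\star f)(s+r,B_r)\,\d r\bigr)$; since $\sqrt{\beta^\eps}=O(|\log\eps|^{-1/2})$ and $\|\Phi^\eps\star f\|_\infty\le\|f\|_\infty$, this insertion tends to $1$ uniformly and $\hat Z^\eps\Rightarrow Z$. Passing to the limit gives
\begin{equation*}
\E\!\left[G(\tilde Z)\,e^{\lambda\tilde\xi(f)}\right]=e^{\lambda^2\|f\|_{L^2}^2/2}\E[G(\tilde Z)]=\E[e^{\lambda\tilde\xi(f)}]\,\E[G(\tilde Z)],
\end{equation*}
using that $Z$ and $\tilde Z$ have the same law.

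Analytic continuation in $\lambda$ extends the identity to $\lambda\in i\R$, so the joint characteristic function of $(\tilde Z,\tilde\xi(f))$ factorizes. Replacing $f$ by a finite linear combination $\sum_j\mu_j f_j$ and invoking Cram\'er--Wold gives joint independence of $(\tilde\xi(f_1),\ldots,\tilde\xi(f_k))$ from $\tilde Z$; since $\tilde\xi$ is determined by its pairings with test functions, $\tilde\xi\perp\tilde Z$. As the subsequence was arbitrary, $(Z^\eps,\xi^\eps)\Rightarrow(Z,\xi)$ with $Z\perp\xi$. The principal obstacle is the joint convergence $\hat Z^\eps\Rightarrow Z$ together with the uniform integrability required to pass to the limit in $e^{\lambda\xi^\eps(f)}$; both should follow from the $L^p$ moment bounds in \cite{caravenna2023critical,tsai2024stochastic} and the uniform $L^\infty$ bound on the Feynman--Kac weight.
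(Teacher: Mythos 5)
Your proof is essentially correct, but it takes a genuinely different route from the paper. The paper's argument is a two-line deduction from Theorem~\ref{t.mainth}: it invokes the general principle that \emph{any} coupling of a black noise with a white noise makes them independent (citing \cite[Theorem~1.11]{himwich2024directed}), and then identifies the marginals of any subsequential limit of $(Z^\eps,\xi^\eps)$. Your argument instead works entirely in the pre-limit: a Cameron--Martin tilt of $\xi$ in the direction $\lambda\,\varphi^\eps\star f$ turns the joint Laplace functional $\EE[G(Z^\eps)e^{\lambda\xi^\eps(f)}]$ into $e^{\lambda^2\|\varphi^\eps\star f\|^2/2}\EE[G(\hat Z^\eps)]$, and the key mechanism is that the resulting Feynman--Kac insertion is sandwiched between $e^{\pm\delta_\eps}$ with $\delta_\eps=O(\sqrt{\beta^\eps})\to0$, so $\hat Z^\eps$ and $Z^\eps$ are multiplicatively close as measures and share the limit $Z$; analytic continuation to $\lambda\in i\R$ and Cram\'er--Wold then factorize the joint characteristic function. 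What this buys: your proof is self-contained --- it uses neither Theorem~\ref{t.mainth} nor any black-noise machinery, and it exhibits the concrete reason for the decoupling (the Cameron--Martin shift is damped by $\sqrt{\beta^\eps}$). What it costs: it leans on the Gaussian/Feynman--Kac structure of the pre-limit (so it would not generalize to, say, the polymer approximations of \cite{caravenna2023critical}), and it requires you to verify that the shifted functional $Z^\eps(\xi+h)$ coincides with the solution driven by the shifted noise (the standard Wick-exponential shift identity), plus the uniform integrability of $e^{\lambda\xi^\eps(f)}$, which is immediate from the uniformly bounded Gaussian variance. Note also that your argument, unlike the paper's, says nothing about the black noise property itself; it proves only the asymptotic independence. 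One cosmetic point: the entire-function argument for extending the identity from $\lambda\in\R$ to $\lambda\in i\R$ is valid because $G$ is bounded and $\tilde\xi(f)$ is Gaussian, but you should state this rather than just say ``analytic continuation.''
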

\begin{proof}
The key property is that any coupling of a black and a white noise must make them independent.
This is proven in \cite[Theorem~1.11]{himwich2024directed} when the black noise is the directed landscape.
The same proof applies here and shows that $Z$ and $\xi$ are independent under any coupling of the two.
Next, since $Z^\eps\Rightarrow Z$ and $\xi^\eps\Rightarrow \xi$, $(Z^\eps,\xi^\eps)$ is tight.
Any limit point of the law of $(Z^\eps,\xi^\eps)$ has its first marginal equal to the law of $Z$ and its second marginal equal to the law of $\xi$.
The two marginals must be independent, so the claim follows. 
\end{proof}

\subsection{Discussion on noise sensitivity}\label{s.noisesensitivity}
We emphasize that, in the present context, our main result goes beyond noise sensitivity. Roughly speaking, noise sensitivity concerns how a pre-limiting model responds to perturbations of the underlying noise---specifically, to partial resampling of that noise; see \cite{garban2015noise} for an introduction.
Below, we demonstrate that $Z^\eps$ is noise sensitive and determine the scale of its sensitivity. In addition, we provide an example illustrating that, in this setting, noise sensitivity as defined below is strictly weaker than the black noise or asymptotic independence property.
%
%

Recall that a family of $\xi$-measurable random variables $\{F^\eps(\xi)\}_{\eps>0}$ is noise sensitive if, for every $\tau>0$,
\begin{equation}\label{e.decayco}
	C(\tau,F^\e)
	:=
	\corr\big( F^\eps(\xi), F^\eps(e^{-\tau}\xi+\sqrt{1-e^{-2\tau}}\xi') \big) \longrightarrow 0,
	\quad
	\text{as } \eps\to 0,
\end{equation}
where $\xi'$ denotes an independent copy of $\xi$.

To put Theorem~\ref{t.mainth} and Corollary~\ref{c.indep} into perspective, let us examine the noise sensitivity of $Z^\e$.
Take nonnnegative $g,g'\in C_c(\R^2)$ that are not identically zero, view $Z^\eps_{0,1}=Z^{\eps}_{0,1}(\xi)$ as a measurable function of $\xi$, and consider
\begin{align*}
	F_{\she}^{\e}(\xi)=Z^{\e}_{0,1}\, g\otimes g' - \EE Z^{\e}_{0,1}\, g\otimes g'\ .
\end{align*}
The Fourier spectrum of $F_{\she}^{\e}$ ``escapes'' to infinite.
More precisely, let $F_k$ denote the $k$th Wiener chaos of $F\in L^2(\Omega)$ and let $c_k(F):=(\EE F_k^2)^{1/2}$, which is sometimes called the Fourier coefficient.
Indeed,
\begin{align}
    \label{e.chaos.2nd}
    \sum_{k\geq 1} c_k(F_{\she}^{\e})^2
    =
    \EE (F_{\she}^{\e})^2,
\end{align}
and by \cite[Theorem~3.2]{bertini1998two}, the rhs converges to a positive limit as $\e\to 0$.
On the other hand, using \eqref{e.she.chaos}, one can show that $c_k(F_{\she}^{\e})^2\to 0$ as $\e\to 0$, for each fixed $k$.
Namely, the weight in the sum in \eqref{e.chaos.2nd} escapes to ever higher $k$s as $\e\to 0$.
This property implies that $F_{\she}^{\e}$ is noise sensitive.
(By \eqref{e.cov} for example; see \cite{benjamini1999noise} and \cite[Proposition~IV.2]{garban2015noise} for a similar statement in the Boolean context.)

A more careful Fourier analysis gives the \emph{scale} of the noise sensitivity.
Call $F^\e$ noise sensitive at scale $s_\eps\to 0$ if $C(\tau_\e,F^\e)\to 1$ whenever $\tau_\e\ll s_\e$ and $C(\tau_\e,F^\e)\to 0$ whenever $\tau_\e\gg s_\e$, where $a_\eps\ll b_\eps$ means that $a_\eps/b_\eps\to 0$.
We show in Appendix~\ref{s.noise.sensitive} that, for every $\bartau \geq 0$,
\begin{align}
	\label{e.noise.sensitive}
	C(\tfrac{\bar{\tau}}{2|\log\e|}, F_{\she}^{\e})
	\longrightarrow
	\frac{
	\ip{ g^{\otimes 2}, \calW^{\theta-\bartau}(1) \, g'{}^{\otimes 2} }
	}{  
	\ip{ g^{\otimes 2}, \calW^{\theta}(1) \, g'{}^{\otimes 2} }
	}.
\end{align}
Hereafter, $\ip{\cdot,\cdot}$ denotes the inner product on $L^2(\R^d)=L^2(\R^d,\d x)$, $\hk(t,x)$ denotes the 2$d$ heat kernel, and $\calW^{\bartheta}(t)$ denotes the bounded integral operator on $L^2(\R^4)$ with the kernel
\begin{align}
\label{e.calW}
\begin{split}
	\calW^{\bartheta}(t,x,x')
	=&
	\int_{u+u'+u''=t} \d u \d u'
	\int_{\R^2} \d y_c \,
	\prod_{i=1}^2 \hk(u,x_i-y)
\\
	&\times  
	\int_{\R^2} \d y'\, 
	4\pi\,\jfn^{\bartheta}(u')\hk(\tfrac{u'}{2},y-y') 
	\prod_{i=1}^2 \hk(u'',y'-x_i'),
\end{split}
\end{align}
where $x=(x_1,x_2),x'=(x'_1,x'_2)\in\R^4$ and
\begin{align}
    \label{e.jfn}
    \jfn^{\bartheta}(t) = \int_0^\infty \d u\, \frac{ u^{t-1} e^{\bartheta t}}{\Gamma(u)}\ .
\end{align}
It is straightforward to check that the rhs of \eqref{e.noise.sensitive} tends respectively to $1,0$ as $\bartau\to 0,\infty$.
Also, $C(\tau,F)$ is decreasing in $\tau$ (see \eqref{e.cov} for example).
Hence, $F_{\she}^{\e}$ is noise sensitive at scale $s_\e=|\log\e|^{-1}$.

The above noise-sensitivity results and Fourier analysis do not imply our main result. In general, the notion of noise sensitivity---interpreted as the decay of correlation in \eqref{e.decayco}---implies neither the black noise property nor the asymptotic independence between the limiting object and the noise used in its construction. Here is an example: let $\xi=B'$ where $B$ is a standard Brownian motion. By \cite[Theorem 1]{HN09} and \cite[Theorems~1.2]{deya2015L2modulus}, there exists a family of $\xi$-measurable random variables $F^\e$ such that $\EE F^{\e}=0$, $\EE(F^\e)^2$ converges to a finite positive limit, and $c_k(F^\e)\to 0$ for every $k$. However, the pair $(F^\e,\xi)$ converges in law to $(F,\xi)$ in which $F$ and $\xi$ remain dependent. In fact, $F$ is constructed from an independent Gaussian random variable and $\xi$, and notably, $\xi$ appears in $F$ through the local time of the Brownian motion $B$.

This example shows that Theorem~\ref{t.mainth} cannot be derived solely from a Fourier analysis of the pre-limiting models.
Our proof, on the other hand, operates at the level of the limiting process $Z$ with the aid of the axioms developed in \cite{tsai2024stochastic}.

\subsection{Some related literature}

The SHF is particularly interesting because it is one of the rare examples where the scaling limit of a critical stochastic PDE can be rigorously obtained and analyzed. 
Moreover, it serves as a natural generalization of the 1$d$ stochastic heat equation and may shed light on the largely unexplored $1+2d$ Kardar--Parisi--Zhang universality class.
The first indication that an intriguing limit of \eqref{e.she} should exist comes from \cite{bertini1998two} through the study of the second moment. The moments are closely related to the two-dimensional delta-Bose gas and more generally, to Schr\"{o}dinger operators with point interactions, which have been extensively studied in the mathematical physics and functional analysis literature; we refer to \cite{Albeverio1988,DellAntonio1994,Dimock2004} and the references therein. Higher moments are subsequentially obtained in \cite{Caravenna2019,gu2021moments} and further studied in \cite{Chen2024}. Moment results like these ensure the tightness of the pre-limiting models, but the uniqueness of the limit point remained open, until the work \cite{caravenna2023critical} proved that the finite-dimensional distributions of the discrete polymers in random environments converge. Recently, the work \cite{tsai2024stochastic} established the axiomatic, continuous-process formulation and construction of the SHF used in the current paper.
Some other properties related to the SHF are studied in \cite{Caravenna2023,Clark2024,liu2024moments,caravenna2025singularity,nakashima2025martingale,chen2025martingale}.



Introduced in \cite{Tsirelson1998,Tsirelson2004}, the notion of black noise offers a way to identify intriguing stochastic objects that are not amenable to traditional stochastic analysis.
Known examples of black noises include Arratia's coalescing flow, the sticky flows, and the Brownian web \cite{tsirelson2004lectures,lejan2004sticky,ellis2016brownian}, the critical planner percolation \cite{shramm2011scaling}, and the directed landscape \cite{himwich2024directed}.

Closely related to our work is the recent paper \cite{himwich2024directed}, in which the authors studied the directed landscape---one of the universal objects in the $1+1d$ KPZ universality class---and proved that it is a black noise. 
This is achieved by establishing variance bounds on polynomial observables (Equation~(2.2) therein).
For the SHF, polynomial observables likely do not suffice. 
The moments of the SHF are conjectured to grow very rapidly (\cite{Rajeev1999} and \cite[Remark~1.8]{gu2021moments}),  in which case polynomial observables may fail to approximate more general observables.
Our proof takes a different route and directly tackles non-polynomial observables.

\subsection*{Acknowledgement}
We learned about the counterexample discussed in Section~\ref{s.noisesensitivity} from David Nualart.
We thank Jeremy Quastel and Mo Dick Wong for useful discussions. 
YG was partially supported by the NSF through DMS-2203014.
LCT was partially supported by the NSF through DMS-2243112 and the Alfred P.\ Sloan Foundation through the Sloan Research Fellowship FG-2022-19308.

\section{Proof of Theorem~\ref{t.mainth}}
\subsection{Reduction to a variance bound}

To prove Theorem~\ref{t.mainth}, we use the criterion in \cite[Lemma~7a2, Corollary~7a3]{Tsirelson2004}, slightly restated below:

\textit{Tsirelson's criterion}: Let $\cP:L^2(\Omega)\to H_1$ be the projection operator.  Fix a countable dense subset $A$ of $\R$, let $\{A_n\}_{n\geq1}$ be an increasing sequence of finite subsets such that $A=\cup_n A_n$. Write $A_n=\{-\infty<t_1<\ldots<t_{|A_n|}<\infty\}$ and interpret $A_n$ as a partition of $\R$. For any $X\in L^2(\Omega)$, define the projection
\begin{equation}\label{e.defPn}
\cP_nX=\sum_{j=1}^{|A_n|+1}\big(\EE[X|\F_{t_{j-1},t_j}]-\EE[X]\big) ,
\end{equation}
with the convention that $t_0=-\infty,t_{|A_n|+1}=\infty$. 
Then $\cP_nX\to \cP X$ in $L^2(\Omega)$ as $n\to\infty$. 

By the criterion, to prove $H_1=\{0\}$, it suffices to find a dense subset $S\subset L^2(\Omega)$ and show that for all $X\in S$
\[
\EE |\cP_nX|^2=\sum_{j=1}^{|A_n|+1}\Var\big[\EE[X|\F_{t_{j-1},t_j}]\big]\longrightarrow 0, \quad \mbox{ as } n\to\infty.
\]

We begin by identifying the set $S$:
\begin{lemma}
The set 
\begin{align}\label{e.defS}
\begin{split}
    S = \Big\{ 
        h(Z_{s_1,t_1} &g_1\otimes g'_1, \ldots, Z_{s_N,t_N} g_N\otimes g'_N)
        \,:\,
        N\geq 1, h\in C_c^\infty(\R^N), 
\\
    &
        s_i<t_i \in\R, \ g_i,g'_i\in C_c(\R^2) 
        \text{ for all }i=1,\ldots,N
    \Big\}
\end{split}
\end{align}
is dense in $L^2(\Omega)$.
\end{lemma}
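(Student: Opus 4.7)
The plan is to decompose the proof into two standard pieces: first, reducing to the fact that $\F$ is generated by the tensor-product observables $\{Z_{s,t}(g \otimes g') : (s,t) \in \R^2_{\leq},\, g,g' \in C_c(\R^2)\}$; and then applying the standard density of $C_c^\infty$-cylindrical functionals of a generating $L^2$ family in $L^2(\Omega, \F, \PP)$.

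For the first step, I would use a Stone--Weierstrass argument to approximate arbitrary $f \in C_c(\R^4)$ by finite linear combinations of tensor products. More precisely, given $f$ with support in a compact $K \subset \R^4$, one can find a sequence $f_n = \sum_{i=1}^{k_n} c_{n,i}\, g_{n,i} \otimes g'_{n,i}$ with $g_{n,i}, g'_{n,i} \in C_c(\R^2)$ such that $f_n \to f$ uniformly and all $f_n$ have support in a common compact enlargement $K'$ of $K$ (this last condition can be enforced by multiplying through by a fixed cutoff in $C_c^\infty(\R^4)$ that equals $1$ on $K$). Since $Z_{s,t}$ is almost surely a locally finite measure, $Z_{s,t}(f_n) \to Z_{s,t}(f)$ almost surely, so $Z_{s,t}(f)$ is measurable with respect to $\sigma\{Z_{s,t}(g \otimes g')\}$, and this $\sigma$-algebra therefore equals $\F$.

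For the second step, I would invoke the functional monotone class theorem: since the generating random variables $Z_{s,t}(g \otimes g')$ lie in $L^2(\PP)$ by the second-moment bound for the SHF in \cite{tsai2024stochastic}, bounded measurable cylindrical functions of finitely many of them are dense in $L^2(\Omega, \F, \PP)$. To upgrade bounded measurable $h$ to $h \in C_c^\infty(\R^N)$, I would use the density of $C_c^\infty(\R^N)$ in $L^2(\R^N, \mu_N)$ for the joint law $\mu_N$ of the $N$ observables, combined with a smooth cutoff to handle the compact support requirement; linear combinations of elements of $S$ can be rewritten as elements of $S$ by concatenating the arguments of $h$, so working directly with $S$ rather than its linear span is harmless. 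I do not expect a genuine obstacle; the only essential input beyond routine measure theory is the second-moment bound on $Z_{s,t}(f)$ supplied by the SHF axioms in \cite{tsai2024stochastic}.
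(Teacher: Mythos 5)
Your proof is correct and follows essentially the same route as the paper's: identify the tensor-product observables $Z_{s,t}(g\otimes g')$ as generators of $\F$ (the paper does this via a countable dense family of tensor products, you via Stone--Weierstrass plus a cutoff), invoke density of bounded cylindrical functionals (the paper cites \cite[Lemma~2.3]{himwich2024directed} where you use the functional monotone class theorem), and upgrade $h$ to $C_c^\infty$ by approximation in $L^2$ of the joint law followed by mollification. One small caveat: your closing remark that linear combinations of elements of $S$ can be rewritten as elements of $S$ by concatenating arguments fails as stated, since $h_1(a)+h_2(b)$ is not compactly supported in $(a,b)$ even when $h_1,h_2$ are --- but this step is not needed, because the monotone-class/martingale argument already produces single cylindrical functions as a dense set.
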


The prime in $g'_i$ denotes an alternative and \emph{not the derivative}. 

\begin{proof}
First, $\F$ is generated by countably many random variables of the form $Z_{s_i,t_i}\,g_i\otimes g'_i$ where $s_i<t_i$ and $g_i,g'_i\in C_c(\R^2)$.
To see why, note that we can restrict the $s\leq t$ in \eqref{e.filt} to $s< t\in\mathbb{Q}$ because the sample space consists of continuous $\mu$ and because $Z_{s,s}$ is deterministic (by Axioms~\eqref{d.shf.} and \eqref{d.shf.mome} in Section~\ref{s.pf.shf}).
Further, letting $C_r(\R^d)$ denote the space of continuous functions on $\R^d$ supported in $\{x:|x|<r\}$ and taking any countable $D\subset C_c(\R^2)$ such that $D^{\otimes 2}\cap C_r(\R^4)$ is dense in $C_r(\R^4)$ for every $r<\infty$, we can restrict the $f\in C_c(\R^4)$ in \eqref{e.filt} to $f\in D^{\otimes 2}$.

Next, by \cite[Lemma 2.3]{himwich2024directed}, if we take bounded $h:\R^N\to \R$, then the set of random variables $h(Y)$ with $Y=(Z_{s_1,t_1} g_1\otimes g'_1,\ldots,Z_{s_N,t_N}g_N\otimes g'_N)$ is dense in $L^2(\Omega)$. It remains to fix such an $h(Y)$ and show that there exists a sequence of $h_k \in C_c^\infty(\R^N)$ such that  $\EE[ |h_k(Y)-h(Y)|^2]\to0$  as $k\to\infty$. By \cite[Theorem 3.14]{rudin1987real}, we may assume that $h\in C_c(\R^N)$, since $C_c(\R^N)$ is dense in $L^2(\R^N,\PP_Y)$ where $\PP_Y$ is the probability measure on $(\R^N,\mathcal{B}(\R^N))$ induced by $Y$. We now define $h_k:=\phi_k\star h$ where $\phi_k$ is a standard mollifier (an approximation of identity). The proof is complete by invoking the bounded convergence theorem.
\end{proof}

Fix an element in $S$:
\begin{equation}\label{e.defX}
X=h(Z_{s_1,t_1} g_1\otimes g'_1,\ldots,Z_{s_N,t_N}g_N\otimes g'_N).
\end{equation}
Our goal is to show that $\cP_n X\to0$ in $L^2(\Omega)$ as $n\to\infty$. 
Given the set $\{(s_i,t_i)\}_{i=1}^N$, we choose the sequence $A_n$ so that every interval $[s,t]$ in the partition either satisfies $[s,t]\subset [s_i,t_i]$ or satisfies $t=s_i$ or $s=t_i$, i.e., overlapping with $[s_i,t_i]$ only at a single point.

Let $\underline{s}=\min\{s_1,\ldots,s_N\}$, $\overline{t}=\max\{t_1,\ldots,t_N\}$. By the above discussion, it is enough to show that for any such $[s,t]\in [\underline{s},\overline{t}]$, we have  
\[
\Var [\EE[X|\F_{s,t}]]\ll t-s, \quad \mbox{ as }t-s\to 0.
\] In fact, we will prove the following bound  
\begin{equation}\label{e.keyestimate}
\Var [\EE[X|\F_{s,t}]]\leq C\frac{t-s}{|\log (t-s)|}, \quad\quad \mbox{ if } t-s\leq \tfrac12.
\end{equation}

\subsection{Properties of the SHF}
\label{s.pf.shf}
We need a few properties of the SHF.
Call $\{u_{\ell}\}_{\ell=1}^\infty\subset L^2(\R^2)\cap C(\R^2)$ an $L^2$-applicable mollifier if every $u_\ell$ is nonnegative and if $\phi\mapsto u_\ell\star\phi$ defines a bounded operator on $L^2(\R^2)$ that strongly converges to the identity operator as $\ell\to\infty$.
For $\mu,\nu\in M_+(\R^4)$ and nonnegative $u\in L^2(\R^2)\cap C(\R^2)$, call $(\mu\bullet_{u}\nu)$ well-defined if, for every $f\in C_c(\R^4)$, the integral
\begin{align*}
	\big( \mu \bullet_{u} \nu \big) f
	:=
	\int_{\R^{8}} \mu(\d x_1\otimes \d x_2) \, u(x_2-x_3) \, \nu(\d x_3 \otimes \d x_4) \, f(x_1,x_4)\ ,
	\quad
	x_i\in\R^2
\end{align*}
converges absolutely.
In this case, by the Riesz--Markov--Kakutani representation theorem $(\mu \bullet_{u} \nu)\in M_+(\R^4)$.
By \cite{tsai2024stochastic}, the law of $Z$ is uniquely characterized by the following Axioms.
\begin{enumerate}
\item \label{d.shf.}
$Z=Z_{s,t}$ is an $M_+(\R^4)$-valued, continuous process on $\R^2_{\leq }$.
\item \label{d.shf.ck}
For all $s< t< u$, there exists an $L^2$-applicable mollifier $\{u_\ell\}_{\ell}$ (which can depend on $s,t,u$) such that $Z_{s,t}\bullet_{u_\ell}Z_{t,u}-Z_{s,u}$ is well-defined and converges vaguely in probability to $0$ as $\ell\to\infty$.
\item \label{d.shf.inde}
For all $s< t< u$, $Z_{s,t}$ and $Z_{t,u}$ are independent.
\item \label{d.shf.mome}
For $n=1,\ldots, 4$, $s<t$, and $x=(x_1,\ldots,x_n),x'=(x'_1,\ldots,x'_n)\in\R^{2n}$,
\begin{align*}
    \EE \bigotimes_{i=1}^n Z_{s,t}(\d x_i,\d x'_i)
    =
    \d x\, \d x' \, \sg^{\intv{n}}(t-s,x,x')\ .
\end{align*}
\end{enumerate}

Here, $\sg^{\intv{n}}(t)$ denotes the 2$d$, $n$-particle delta-Bose semigroup.
It actually depends on $\theta$, which we fix.
This $\sg^{\intv{n}}(t)$ was constructed in \cite[Section~8]{gu2021moments} based on \cite{Rajeev1999,Dimock2004} as a strongly continuous semigroup on $L^2(\R^{2n})$ and it has an explicit expression.
In this paper, we will only need the expression for $n=1,2$:
\begin{align}
    \label{e.sg}
    \sg^{\intv{1}}(t)
    =
    \hk(t),
    \qquad
    \sg^{\intv{2}}(t)
    =
    \hk(t)^{\otimes 2} + \calW(t).
\end{align}
Here, $\calW(t)$ is defined by \eqref{e.calW} plugged in $\bartheta=\theta$.
Let $P_{s,t}=\d x\,\d x' \hk(t-s,x-x')$, where $x,x'\in\R^2$, and $W_{s,t}=Z_{s,t}-P_{s,t}$.
It follows from \eqref{e.sg} that
\begin{align}
    \label{e.sgW}
    \EE\, W_{s,t} = 0,
    \qquad
    \EE\, \bigotimes_{i=1}^2 W_{s,t}(\d x_i,\d x'_i)
    =
    \d x\, \d x'\, \calW(t-s,x,x'),
\end{align}
where $x=(x_1,x_2),x'=(x'_1,x'_2)\in\R^4$.

The key property that produces the log factor in \eqref{e.keyestimate} is the bound:
\begin{align}
    \label{e.jfn.bd}
    \jfn(t) := \jfn^{\theta}(t) \leq \frac{C}{t\,|\log t|^2},
    \qquad
    t \leq \tfrac{1}{2},
\end{align}
where $C$ depends on $\theta$ but not $t$.
See \cite[Equation (8.5)]{gu2021moments} for example.

It was shown in \cite[Section~2.2]{tsai2024stochastic} that, under Axioms~\eqref{d.shf.inde}--\eqref{d.shf.mome}, the limit in Axiom~\eqref{d.shf.ck} converges in $L^4$ and does not depend on the mollifier $u_\ell$, and Axiom~\eqref{d.shf.ck} extends to multifold
\begin{align}
    \label{e.ck}
    Z_{t_0,\ldots,t_k}
    =
    \lim_{\ell\to\infty}
    Z_{t_0,t_1} \bullet_{u_\ell} \ldots \bullet_{u_\ell} Z_{t_{k-1},t_{k}}
    :=
    Z_{t_0,t_1} \bullet \ldots \bullet Z_{t_{k-1},t_{k}},
\end{align}
where the limit also converges in $L^4$.
Another way of defining the product $\bullet$ was introduced in \cite{Clark2024}.
In this paper, we work with the above definition.

\subsection{Proving the variance bound \eqref{e.keyestimate}}

Fix $X$ as in \eqref{e.defX} and an interval $[s,t]$ with $t-s\leq\frac{1}{2}$. 
After reindexing, we assume that $[s,t]\subset [s_i,t_i]$ for $1\leq i\leq m$, and for $i> m$, the intersection $[s,t]\cap [s_i,t_i]$ contains at most one point.

The proof consists of two steps.

\subsubsection{Taylor expansion}

For $1\leq i\leq m$, since $[s,t]\subset[s_i,t_i]$, we use \eqref{e.ck} and $Z_{s,t}=P_{s,t}+W_{s,t}$ to write
\begin{equation}\label{e.ck1}
\begin{aligned}
    Z_{s_i,t_i}&=Z_{s_i,s}\bullet(P_{s,t}+W_{s,t})\bullet Z_{t,t_i}\\
    &= Z_{s_i,s}\bullet P_{s,t} \bullet Z_{t,t_i}+Z_{s_i,s}\bullet W_{s,t} \bullet Z_{t,t_i}
    \end{aligned}
\end{equation}
 The first term on the rhs of \eqref{e.ck1} is an $M_+(\R^4)$-valued random variable, so one may view the second term as a random signed measure.

For $i=1,\ldots,m$, define 
\[
\begin{aligned}
U_i=Z_{s_i,s}\bullet P_{s,t} \bullet Z_{t,t_i} (g_i\otimes g'_i),\quad\quad V_i=Z_{s_i,s}\bullet W_{s,t} \bullet Z_{t,t_i}( g_i\otimes g'_i),
\end{aligned}
\]
so that $U_i$ is independent of $\F_{s,t}$. Define 
\[
\F_{\mathrm{rest}}=\sigma\big(\F_{s',t'}:\,[s',t']\subset(-\infty,s]\cup[t,\infty)\big),
\]
and
\begin{align}
    &U=(U_1,\ldots,U_m), \quad V=(V_1,\ldots,V_m), 
\\
    &R=(Z_{s_{m+1},t_{m+1}}g_{m+1}\otimes g'_{m+1},\ldots, Z_{s_N,t_N}g_N\otimes g'_N),
\end{align}
so that $X=h(U+V,R)$.
Given that $h\in C_c^\infty(\R^N,\R)$, we perform a Taylor expansion in $V$:
\[
\begin{aligned}
\EE[X|\F_{s,t}]=\EE[h(U+V,R)|\F_{s,t}]=I_1+I_2+I_3,\\
\end{aligned}
\]
with 
\[
\begin{aligned}
&I_1=\EE[h(U,R)|\F_{s,t}],\quad I_2=  \EE[ \nabla h(U,R)\cdot V|\F_{s,t}],\\
& |I_3|=|\text{remainder}| \leq C  \EE [|V|^2|\F_{s,t}].
\end{aligned}
\]
Here $\nabla h$ denotes the gradient of $h$ with respect to the first $m$ variables, and the constant $C$ only depends on $h$.

Since $(U,R)$ is independent of $\F_{s,t}$, we know that $I_1=\EE[h(U,R)]$ is deterministic. For $I_2$,  write its expectation as 
\[
\EE I_2= \EE \big[\EE[\nabla  h(U,R) \cdot V|\F_{\mathrm{rest}}]\big].
\]
Since $\nabla h(U,R)$ is $\F_{\mathrm{rest}}$-measurable and $\EE[V|\F_{\mathrm{rest}}]=0$, we conclude that $\EE I_2=0$. 
As for $I_3$, by \cite[Lemma 3.1]{tsai2024stochastic}, for any $p<3/2$, there exists $C$ such that 
\[
\EE I_3^2 \leq C \EE |V|^4 \leq C  (t-s)^{p}.
\]

Putting things together gives
\[
\begin{aligned}
\EE[X|\F_{s,t}]-\EE X&=(I_1-\EE I_1)+(I_2-\EE I_2)+(I_3-\EE I_3)\\
&=I_2+(I_3-\EE I_3),
\end{aligned}
\]
so
\[
\begin{aligned}
\Var\,\EE[X|\F_{s,t}] \leq  2\EE I_2^2+2\EE I_3^2\leq 2\EE I_2^2+C(t-s)^{p}.
\end{aligned}
\]

\subsubsection{Bounding the conditional expectation}
To prove \eqref{e.keyestimate}, it remains to show that
\begin{equation}\label{e.goal1}
\EE I_2^2\leq C\frac{t-s}{|\log (t-s)|} \text{ for } t-s<\tfrac12, 
\qquad
I_2=\EE[ \nabla  F(U,R)\cdot V|\F_{s,t}].
\end{equation}
 To deal with the square of a conditional expectation, we introduce ``replicas'' that are independent of $\F_{s,t}$:  let $\tilde{Z}$ be an independent copy of $Z$, and define
\[
\begin{aligned}
\tilde{U}_i=\tilde{Z}_{s_i,s}\bullet P_{s,t} \bullet \tilde{Z}_{t,t_i} (g_i\otimes g'_i),\quad\quad \tilde{V}_i=\tilde{Z}_{s_i,s}\bullet W_{s,t} \bullet \tilde{Z}_{t,t_i}( g_i\otimes g'_i),
\end{aligned}
\]
and 
\[
\begin{aligned}
&\tilde{U}=(\tilde{U}_1,\ldots,\tilde{U}_m), \quad \tilde{V}=(\tilde{V}_1,\ldots,\tilde{V}_m), \\
&\tilde{R}=(\tilde{Z}_{s_{m+1},t_{m+1}}g_{m+1}\otimes g'_{m+1},\ldots, \tilde{Z}_{s_N,t_N}g_N\otimes g'_N).
\end{aligned}
\]
In other words, we freeze $W_{s,t}$ and replace all other random variables by their independent copies. This way, the second moment of $I_2$ can be written as
\begin{align}
    \label{e.J2}
    \EE I_2^2
    =
    \EE J_2,
    \quad
    J_2
    =
    (\nabla  h(U,R)\cdot V)  (\nabla  h(\tilde{U},\tilde{R})\cdot \tilde{V}).
\end{align}
More explicitly,
\begin{align}
    \label{e.conditional.E}
    J_2
    &=
    \sum_{i,j=1}^m \partial_i h(U,R) \, \partial_j h(\tilde{U},\tilde{R}) \, V_i\, \tilde{V}_j, 
\\
    \label{e.ViVj}
    V_i\tilde{V}_j
    &=
    Z_{s_i,s}\bullet W_{s,t} \bullet Z_{t,t_i}( g_i\otimes g'_i)
    \cdot 
    \tilde{Z}_{s_j,s}\bullet W_{s,t} \bullet \tilde{Z}_{t,t_j}( g_j\otimes g'_j).
\end{align}
Since $W_{s,t}$ is independent of every other variable in \eqref{e.conditional.E}--\eqref{e.ViVj}, we average it out first. 
Let $\EE_{W_{s,t}}$ denote the expectation on $W_{s,t}$ only.
Using the second identity in \eqref{e.sgW}, it is not hard to check that
\begin{align*}
    \EE_{W_{s,t}}&[V_i\tilde{V}_j]
    =
    \int_{\R^{16}} 
    Z_{s_i,s}(\d x_1, \d x'_1)\, \tilde{Z}_{s_j,s}(\d x_2, \d x'_2)\, Z_{t,t_i}(\d x''_1, \d x'''_1)
\\
    &
    \tilde{Z}_{t,t_j}(\d x''_2, \d x'''_2) \,
    g_i(x_1) \, g_j(x_2) \, \calW(t-s,x',x'') \, g'_i(x'''_1) \, g'_j(x'''_2).
\end{align*}
By \eqref{e.calW}--\eqref{e.jfn}, the function $\calW(t-s,x',x'')$ is positive.
With $Z_{u,v}$ and $\tilde{Z}_{u,v}$ being positive-measure-valued, we bound the last integral by taking absolute value of the $g$s:
\begin{align}
\label{e.ViVj.bd}
\begin{split}    
    |\EE_{W_{s,t}}&V_i\tilde{V}_j|
    \leq
    \int_{\R^{16}} 
    Z_{s_i,s}(\d x_1, \d x'_1)\, \tilde{Z}_{s_j,s}(\d x_2, \d x'_2)\, Z_{t,t_i}(\d x''_1, \d x'''_1)
\\
    &
    \tilde{Z}_{t,t_j}(\d x''_2, \d x'''_2) \,
    |g_i(x_1) \, g_j(x_2)| \, \calW(t-s,x',x'') \, |g'_i(x'''_1) \, g'_j(x'''_2)|.
\end{split}
\end{align}

Now, in \eqref{e.J2}, take $\EE_{W_{s,t}}$ within the $\EE$ and bound $|\nabla h|$ by a constant to get
$\
    \EE I_2^2
    \leq 
    C\sum_{i,j}\EE |\EE_{W_{s,t}}V_i\tilde{V}_j|
$.
Use \eqref{e.ViVj.bd} to bound the absolute value and use the first identity in \eqref{e.sg} to take the remaining $\EE$.
We obtain that
\begin{align*}
    \EE & I_2^2
    \leq
    C \,
    \sum_{i,j=1}^m
    \int_{\R^{16}} \d x\, 
    \hk(s-s_i,x_1-x'_1)\, \hk(s-s_j,x_2-x'_2) \, \calW(t-s,x',x'') \,
\\
    &
    \hk(t_i-t,x''_1-x'''_1)\, \hk(t_j-t,x''_2-x'''_2)
    |g_i(x_1) \, g_j(x_2) \, \, g'_i(x'''_1) \, g'_j(x'''_2)|,
\end{align*}
where $\d x = \d x_1 \cdots \d x'''_1 \, \d x_2\cdots \d x'''_2$.
Insert \eqref{e.calW} into the last integral, use the semigroup property of $\hk$ to simplify the result, and write $\hk(t)g=\int_{\R^2} \d x\, \hk(t,\cdot-x')g(x')$ to simplify notation.
We write arrive
\begin{align*}
    \EE I_2^2
    \leq
    C
    \sum_{i,j=1}^m
    &\int_{u+u'+u''=t-s} \d u \d u'\, 
    \int_{\R^2} \d y \,
    \prod_{k=i,j} \hk(u+s-s_k)|g_k| \, (y)
\\
    &\times
    \int_{\R^2} \d y' \,
    \hk(\tfrac{u'}{2},y-y')\,\jfn(u')\,
    \prod_{k=i,j} \hk(u''+t_k-t)|g'_k| \, (y').
\end{align*}
Since $g_i,g'_i\in C_c(\R^2)$, the integral over $(y,y')\in\R^4$ is bounded, uniformly over $u,u',u''\leq \frac{1}{2}$.
Hence,
\begin{align*}
    \EE I_2^2
    \leq
    C
    \int_{u+u'\leq t-s} \d u \d u'\, \jfn(u')
    \leq
    (t-s) \int_0^{t-s} \d u'\, \jfn(u').
\end{align*}
The desired bound \eqref{e.goal1} now follows from \eqref{e.jfn.bd}.

\appendix

\section{Noise sensitivity of $Z^\e$}
\label{s.noise.sensitive}
Here we prove \eqref{e.noise.sensitive}.
First, the argument for proving \cite[Equation~(3.1)]{gu2025noise} gives that
\begin{align}
    \label{e.cov}
    C'(\tau,F)
    :=
    \cov\big( F(\xi), F(e^{-\tau}\xi+\sqrt{1-e^{-2\tau}}\xi') \big)
    =
    \sum_{k \geq 1} e^{-k\tau} c_{k}(F)^2.
\end{align}
To evaluate this for $F=F^{\e}_0$, rename the $\beta_\e$ in \eqref{e.she} to $\beta$, allow $\beta$ to vary in $(0,\infty)$, and write $Z^{\e}(\xi,\beta)$ and $F_{\she}^{\e}(\xi,\beta)=Z^{\e}_{0,1}(\xi,\beta)\, g\otimes g'$ for the results.
In particular, $F_{\she}^{\e}(\xi)=F_{\she}^{\e}(\xi,\beta_\e)$.
It is straightforward to check that
\begin{align}
	\label{e.she.chaos}
	c_k\big(F_{\she}^{\e}(\cdot,\beta)\big)^2
	=
	\beta^k 
	\int \d\vec{u} \, \Ip{g^{\otimes 2}, \hk(u_0)^{\otimes 2}\Phi_\e \hk(u_1)^{\otimes 2} \, \cdots \, \Phi_\e \hk(u_k)^{\otimes 2} g'{}^{\otimes 2} },
\end{align}
where $\d\vec{u}=\d u_1\cdots\d u_k$, the integral runs over positive $u_i$s under the constraint $u_0+\ldots+u_k=1$, $\Phi_\e(x_1,x_2)=\e^{-2}\Phi(\frac{x_1-x_2}{\e})$ acts as a multiplicative operator on $\R^4$, and $\Phi$ is defined after \eqref{e.noise.mollified}.
In particular,
\begin{align}
    c_k(F_{\she}^{\e}(\cdot,\beta))^2=\beta^{k}c_k\big(F_{\she}^{\e}(\cdot,1))^2.
\end{align}
This allows us to absorb $e^{-k\tau}$ into $\beta^{k}$.
Writing $\beta_{\e,\bartau}=\beta_{\e} e^{-\bar{\tau}/2|\log\e|}$, we obtain
\begin{align*}
	C'(\tfrac{\bar{\tau}}{2|\log\e|}, F_{\she}^{\e} \big)
	&=
	\sum_{k \geq 1} c_\e\big(F_{\she}^{\e}(\cdot,\beta_{\e,\bar{\tau}})\big)^2
\\
	&=
	\EE \big(Z^{\e}_{0,1}(\cdot,\beta_{\e,\bartau})\, g\otimes g'\big)^2
	-
	\big(\EE Z^{\e}_{0,1}(\cdot,\beta_{\e,\bartau})\, g\otimes g'\big)^2\ .
\end{align*}
Note that $\EE Z^{\e}_{0,1}(\cdot,\beta)g\otimes g'=\ip{g,\hk(1)g'}$.
Dividing both sides by 
$
	\EE(F_{\she}^{\e})^2
	=
	\EE \big(Z^{\e}_{0,1}(\cdot,\beta_{\e,0})\, g\otimes g'\big)^2
	-
	\ip{g,\hk(1)g'}^2	
$ 
gives
\begin{align*}
	C(\tfrac{\bartau}{2|\log\e|}, F_{\she}^{\e} \big)
	=
	\frac{
		\EE (Z^{\e}_{0,1}(\cdot,\beta_{\e,\bartau})\, g\otimes g')^2
		-
		\ip{g,\hk(1)g'}^2
	}{
		\EE (Z^{\e}_{0,1}(\cdot,\beta_{\e,0})\, g\otimes g')^2
		-
		\ip{g,\hk(1)g'}^2
	}.
\end{align*}
Note that $\beta_{\e,\sigma}=\beta_\e\,(1-\frac{\sigma}{2}|\log\e|+O(|\log\e|^{-2}))$ is equal to
\begin{align*}
	\frac{2\pi}{|\log \eps|}+\frac{\pi}{|\log \eps|^2}\Big(\theta-\sigma-2\log 2+2\gamma+2\int_{\R^4} \d x\,\d x'\,\Phi(x)\,\log |x-x'|\,\Phi(x')\Big) 
\end{align*}
up to an error of $O(|\log\e|^{-3})$.
By \cite[Theorem~3.2]{bertini1998two}, for every $\sigma\in\R$,
\begin{align*}
	\EE \big(Z^{\e}_{0,1}(\cdot,\beta_{\e,\sigma})\, g\otimes g'\big)^2
	\longrightarrow
	\Ip{ g^{\otimes 2}\, (\hk(1)^{\otimes 2}+\calW^{\theta-\sigma}(1)) g'{}^{\otimes 2} }.
\end{align*}
Using this for $\sigma=0,\bartau$ gives the desired result.

\bibliographystyle{alpha}
\bibliography{ref.bib}

\end{document}